\newif\iflibertine
\libertinetrue

\pdfoutput=1
\documentclass{article}
\usepackage{amsmath, amsthm, amssymb}
\usepackage{eucal}
\usepackage{mathrsfs}
\usepackage{geometry}
\usepackage[bottom]{footmisc}
\usepackage[unicode,bookmarksnumbered=true,colorlinks=true,allcolors=blue,linktoc=all]{hyperref}
\usepackage[capitalise,nameinlink]{cleveref}
\usepackage{tikz-cd}
\usepackage{stmaryrd}
\usepackage{listings}
\usepackage{etoolbox}
\usepackage{thm-restate}
\usepackage[nottoc,notlot,notlof]{tocbibind}
\usepackage{enumitem}
\usepackage{mathtools}
\usepackage{quiver}
\usepackage{caption}

\iflibertine
\usepackage[T1]{fontenc}
\usepackage{libertine}
\usepackage[libertine]{newtxmath}
\fi

\newlist{steps}{enumerate}{1}
\setlist[steps]{label=(\arabic*)}
\crefname{stepsi}{Step}{Steps}
\Crefname{stepsi}{Step}{Steps}

\crefname{subsection}{Subsection}{subsections}

\hypersetup{bookmarksdepth=2}
\newtheorem{theorem}{Theorem}

\newtheorem{thm}{Theorem}

\newtheorem{prop}[thm]{Proposition}

\theoremstyle{definition}

\newtheorem{notn}[thm]{Notation}

\newtheorem*{indhyp}{Inductive Hypothesis}

\newtheorem{remark}[thm]{Remark}

\newtheorem*{thm*}{Theorem}

\newcommand{\ncmd}{\newcommand}

\definecolor{DefColor}{rgb}{0.6,0.15,0.25}

\ncmd{\mathbfsf}[1]{\mathord{\text{\normalfont\bfseries #1}}}

\iflibertine
\ncmd{\mbb}[1]{\mathbfsf{#1}}
\else
\ncmd{\mbb}[1]{\mathbb{#1}}
\fi
\ncmd{\mrm}[1]{\mathrm{#1}}
\ncmd{\mcl}[1]{\mathcal{#1}}
\ncmd{\mfk}[1]{\mathfrak{#1}}
\ncmd{\mbf}[1]{\mathbf{#1}}
\ncmd{\mscr}[1]{\mathscr{#1}}

\ncmd{\todo}[1]{\textbf{TODO #1}}
\ncmd{\reftodo}[1]{\textbf{REF #1}}

\DeclareRobustCommand{\minwidthbox}[2]{%
  \mathmakebox[\ifdim#2<\width\width\else#2\fi]{#1}%
}
\ncmd{\too}[1][]{\xrightarrow{\minwidthbox{#1}{1em}}}
\ncmd{\oot}[1][]{\xleftarrow{\minwidthbox{#1}{1em}}}
\def\isoraise{\iflibertine -0.8ex\else -0.5ex\fi}
\ncmd{\iso}{\too[\smash{\raisebox{\isoraise}{\ensuremath{\scriptstyle\sim}}}]}
\ncmd{\osi}{\oot[\smash{\raisebox{\isoraise}{\ensuremath{\scriptstyle\sim}}}]}
\ncmd{\hooktoo}[1][]{\xhookrightarrow{\minwidthbox{#1}{1em}}}
\ncmd{\adj}[1][]{\mathrel{\substack{\xrightarrow{\minwidthbox{#1}{1em}} \\[-.7ex] \xleftarrow{\minwidthbox{#1}{1em}}}}}

\ncmd{\qin}{\quad\in\quad}

\ncmd{\Id}{\mrm{Id}}
\ncmd{\Nm}{\mathrm{Nm}}

\ncmd{\BB}{\mrm{B}}
\ncmd{\clB}{\mcl{B}}
\ncmd{\CC}{\mcl{C}}
\ncmd{\DD}{\mcl{D}}
\ncmd{\EE}{\mcl{E}}
\ncmd{\TT}{\mcl{T}}
\ncmd{\MM}{\mcl{M}}
\ncmd{\KK}{\mrm{K}}
\ncmd{\ku}{\mrm{ku}}
\ncmd{\KU}{\mrm{KU}}
\ncmd{\cX}{\mcl{X}}
\ncmd{\cY}{\mcl{Y}}
\ncmd{\Kn}{\KK(n)}
\ncmd{\Knp}{\KK(n{+}1)}
\ncmd{\Ko}{\KK(1)}
\ncmd{\Tn}{\mrm{T}(n)}
\ncmd{\Tnp}{\mrm{T}(n{+}1)}
\ncmd{\To}{\mrm{T}(1)}
\ncmd{\Tm}{\mrm{T}(m)}
\ncmd{\KTnp}{\KK_{\Tnp}}
\ncmd{\KKo}{\KK_{\Ko}}
\ncmd{\KTo}{\KK_{\To}}
\ncmd{\THH}{\mrm{THH}}
\ncmd{\TC}{\mrm{TC}}
\ncmd{\TCm}{\TC^-}
\ncmd{\An}{A_n}
\ncmd{\rmE}{\mrm{E}}
\ncmd{\En}{\rmE_n}
\ncmd{\Enp}{\rmE_{n{+}1}}
\ncmd{\BPn}{\mrm{BP}\langle n\rangle}
\ncmd{\JWnp}{\rmE(n{+}1)}
\ncmd{\cJWnp}{\widehat{\JWnp}}
\ncmd{\Enpk}{\Enp(\kappa)}
\ncmd{\lsF}{\mscr{F}}
\ncmd{\lsG}{\mscr{G}}
\ncmd{\bbC}{\mbb{C}}
\ncmd{\GG}{\mbb{G}}
\ncmd{\NN}{\mbb{N}}
\ncmd{\ZZ}{\mbb{Z}}
\ncmd{\QQ}{\mbb{Q}}
\ncmd{\Qbar}{\overline{\QQ}}
\ncmd{\clQ}{\mcl{Q}}
\ncmd{\Qab}{\QQ(\zeta_\infty)}
\ncmd{\Zp}{\ZZ_p}
\ncmd{\Qp}{\QQ_p}
\ncmd{\Fp}{\mbb{F}_p}
\ncmd{\Fpbar}{\overline{\mbb{F}}_p}
\ncmd{\Fell}{\mbb{F}_\ell}
\renewcommand{\SS}{\mbb{S}}
\ncmd{\SKn}{\SS_{\Kn}}
\ncmd{\SKnp}{\SS_{\Knp}}
\ncmd{\SKo}{\SS_{\Ko}}
\ncmd{\STn}{\SS_{\Tn}}
\ncmd{\STnp}{\SS_{\Tnp}}
\ncmd{\rmP}{\mrm{P}}
\ncmd{\OO}{\mcl{O}}
\ncmd{\one}{\mbf{1}}
\ncmd{\bbE}{\mbb{E}}

\ncmd{\irchi}[2]{\raisebox{\depth/2}{$#1\chi$}}
\DeclareRobustCommand{\rchi}{{\mathpalette\irchi\relax}}
\ncmd{\ch}{\rchi}
\ncmd{\cch}{\widehat{\scalebox{1.15}{$\rchi$}}}

\ncmd{\otimesu}{\mathop{\otimes}\limits}

\ncmd{\fL}{L}
\ncmd{\fLLam}{\fL^\Lambda}
\ncmd{\LKn}{L_{\Kn}}
\ncmd{\LKnp}{L_{\Knp}}
\ncmd{\LKo}{L_{\Ko}}
\ncmd{\LTn}{L_{\Tn}}
\ncmd{\LTnp}{L_{\Tnp}}
\ncmd{\LTm}{L_{\Tm}}
\ncmd{\Ln}{L_n}
\ncmd{\Lnm}{L_{n{-}1}}
\ncmd{\Mn}{M_n}
\ncmd{\Mnf}{M_n^f}
\ncmd{\Lof}{L_1^f}
\ncmd{\Lnf}{L_n^f}
\ncmd{\Lnmf}{L_{n{-}1}^f}
\ncmd{\Lnpf}{L_{n{+}1}^f}
\ncmd{\Mz}{M_0}
\ncmd{\Lz}{L_0}

\ncmd{\LL}{\mrm{L}}
\ncmd{\RR}{\mrm{R}}
\ncmd{\BC}{\mrm{BC}}
\ncmd{\psa}{\oplus}
\ncmd{\dbl}{\mrm{dbl}}
\ncmd{\op}{\mrm{op}}
\ncmd{\pifin}{\pi\text{-}\mathrm{fin}}
\ncmd{\pfin}{p\text{-}\mathrm{fin}}
\ncmd{\seg}{\mrm{seg}}
\ncmd{\perf}{\mrm{perf}}
\ncmd{\st}{\mrm{st}}

\ncmd{\pt}{\mrm{pt}}
\ncmd{\Perf}{\mrm{Perf}}
\ncmd{\Mod}{\mrm{Mod}}
\ncmd{\cMod}{\widehat{\Mod}\vphantom{\Mod}}
\ncmd{\cModdbl}{\cMod^{\dbl}}
\ncmd{\Vect}{\mrm{Vect}}
\ncmd{\Ab}{\mrm{Ab}}
\ncmd{\Fin}{\mrm{Fin}}
\ncmd{\Spaces}{\mcl{S}}
\ncmd{\Spacespifin}{\Spaces_{\pifin}}
\ncmd{\Spacespfin}{\Spaces_{\pfin}}
\ncmd{\Sp}{\mrm{Sp}}
\ncmd{\SpTn}{\Sp_{\Tn}}
\ncmd{\SpTnp}{\Sp_{\Tnp}}
\ncmd{\SpTm}{\Sp_{\Tm}}
\ncmd{\SpKn}{\Sp_{\Kn}}
\ncmd{\SpKo}{\Sp_{\Ko}}
\ncmd{\SpKnp}{\Sp_{\Knp}}
\ncmd{\Span}{\mrm{Span}}
\ncmd{\Spano}{\Span_1}
\ncmd{\equivSp}{\underline{\Sp}}
\ncmd{\Cat}{\mrm{Cat}}
\ncmd{\Catpifin}{\Cat_{\pifin}}
\ncmd{\Catpfin}{\Cat_{\pfin}}
\ncmd{\CatLn}{\Cat_{\Ln}}
\ncmd{\CatLnm}{\Cat_{\Lnm}}
\ncmd{\CatMn}{\Cat_{\Mn}}
\ncmd{\CatMnf}{\Cat_{\Mnf}}
\ncmd{\CatLnf}{\Cat_{\Lnf}}
\ncmd{\Catperf}{\Cat_{\perf}}
\ncmd{\Catst}{\Cat_{\st}}
\ncmd{\PrL}{\mrm{Pr}^\mrm{L}}
\ncmd{\PrLst}{\PrL_{\mrm{st}}}
\ncmd{\PrLstw}{\PrL_{\mrm{st},\omega}}
\ncmd{\PrLKn}{\PrL_{\Kn}}
\ncmd{\PrLKnw}{\PrL_{\Kn,\omega}}
\ncmd{\Mfg}{\mcl{M}_\mrm{fg}}
\ncmd{\Thick}{\mrm{Thick}}

\ncmd{\yon}{\text{\usefont{U}{min}{m}{n}\symbol{'110}}}
\DeclareFontFamily{U}{min}{}
\DeclareFontShape{U}{min}{m}{n}{<-> dmjhira}{}

\DeclareMathOperator{\Alg}{Alg}
\DeclareMathOperator{\CAlg}{CAlg}
\ncmd{\CAlgG}{\CAlg_G}
\DeclareMathOperator{\CMon}{CMon}
\DeclareMathOperator{\coCMon}{coCMon}
\ncmd{\CMoninf}{\CMon_\infty}
\ncmd{\coCMoninf}{\coCMon_\infty}
\DeclareMathOperator*{\colim}{colim}

\ncmd\noloc{%
  \nobreak
  \mspace{6mu plus 1mu}
  {:}
  \nonscript\mkern-\thinmuskip
  \mathpunct{}
  \mspace{2mu}
}

\title{\vspace{-1.2em}The Redshift Bound from Quillen--Lichtenbaum}
\author{Shay Ben-Moshe\thanks{Max Planck Institute for Mathematics, Bonn, Germany.} \thanks{Faculty of Mathematics and Computer Science, Weizmann Institute of Science, Israel.}}
\date{}

\begin{document}
	\maketitle

	\vspace{-1.8em}
	
	\begin{abstract}
		We give a new proof that algebraic K-theory increases chromatic height by at most one, originally established by Clausen--Mathew--Naumann--Noel. Our argument proceeds by descent from the Lubin--Tate spectrum, for which the required vanishing follows from Hahn--Wilson's Quillen--Lichtenbaum result.
	\end{abstract}

	
	
	\section*{Introduction}

Two of the foundational results that have uncovered the relationship between algebraic K-theory and the chromatic filtration are Mitchell's theorem \cite{mitchell} and the Quillen--Lichtenbaum conjecture proved by Voevodsky--Rost \cite{Voevodsky1,Voevodsky2}.
The first says that the algebraic K-theory of any ordinary ring $R$ vanishes at chromatic heights above $1$, namely
\[
	\LTm \KK(R) = 0
	\quad \text{for all $m \geq 2$}.
\]
The second, as reinterpreted by Waldhausen \cite{Waldhausen1984}, says that for suitable rings $R$, the map
\[
	\KK(R)_{(p)} \too \Lof \KK(R),
\]
which approximates K-theory by chromatic heights $0$ and $1$, has bounded above fiber.
We note that the latter implies the former for these suitable rings, since $\Tm$-localization vanishes on bounded above spectra.

Based on these and computational evidence from height $1$, Ausoni--Rognes have outlined the far-reaching redshift conjectures, generalizing these phenomena to higher chromatic heights \cite{RognesOberwolfach,AR02,AR08}.
These conjectures have seen remarkable progress in recent years.
In particular, the higher height analogue of Mitchell's theorem was proved by Clausen--Mathew--Naumann--Noel \cite{DescVan}, using the purity theorem of Land--Mathew--Meier--Tamme \cite{purity}, giving the following redshift upper bound.

\begin{theorem}[{\cite[Theorem C]{DescVan}, \cref{main-thm}}]\label{main-thm-intro}
	If $\CC \in \Catperf$ is $\Lnf$-local then
	\[
		\LTm \KK(\CC) = 0
		\quad \text{for all $m \geq n{+}2$}.
	\]
\end{theorem}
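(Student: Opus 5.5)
The plan is to reduce to the case $\CC = \Perf(\En)$, or more precisely to $\En$-linear categories, by descent along the unit map $\SS \to \En$, and then to settle that case using Hahn--Wilson's Quillen--Lichtenbaum theorem.

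\textbf{Reduction to $\Ln$-local and then $\En$-linear categories.} First I would reduce from $\Lnf$-local to $\KK(n)$-local, or $\Ln$-local, input. The relevant tools are the fact that $\LTm\KK$ for $m\geq 2$ is truncating/localizing, and the purity theorem of Land--Mathew--Meier--Tamme: $\LTm \KK(\CC)$ depends only on $\LTm\KK$ of the $\Tn$- and lower-height localizations of $\CC$ when $m \geq n+2$. Concretely, for $\Lnf$-local $\CC$ there is a finite filtration by the localizations $\mathrm{L}_{\Tn}, \dots, \mathrm{L}_{\mathrm{T}(0)}$, and $\LTm\KK$ sends the resulting Verdier sequences to cofiber sequences. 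So it suffices to treat $\CC$ that are $\Tn$-local (over $\SS_{\Tn}$), and, inducting on $n$, lower heights follow by the same argument.

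\textbf{Descent from $\En$.} Next I would use that $\SS_{\Tn}\to\En$ (or its $\Tn$-local version) is a descendable, or at least nilpotent-in-$\LTm$ sense, pro-Galois extension. Hence $\CC \simeq \lim_\Delta \CC\otimes \En^{\otimes \bullet+1}$, and each term is an $\En$-linear category. I would then argue that $\LTm\KK$ of $\CC$ is a retract of a finite totalization, using the nilpotence of the descent. For this the theorem of Clausen--Mathew--Naumann--Noel on descent of $\LTm\KK$ along descendable maps might be invoked, but I would prefer the more elementary approach: $\LTm\KK(\CC)$ is a module over $\LTm\KK(\SS_{\Tn})$, which maps to $\LTm\KK(\En)$. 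If the latter vanishes, then $\LTm\KK(\CC\otimes\En^{\otimes k})$ vanishes for all $k$, being a module over $\LTm\KK(\En)$ via the tensor structure. The totalization then vanishes provided descent holds, using the descendability of $\En\otimes \En^{\otimes k}$ over the cosimplicial terms.

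\textbf{The key vanishing: $\LTm\KK(\En)=0$ for $m\geq n+2$.} Hahn--Wilson show that for suitable $\mathbb{E}_3$-forms of truncated Brown--Peterson spectra $\BPn$, the map $\KK(\BPn)_{(p)}\to \Lnpf \KK(\BPn)$ has bounded above fiber, after $p$-completion and modulo a type $n{+}2$ complex. From this, $\LTm\KK(\BPn)=0$ for $m\ge n+2$, since $\Tm$-localization kills bounded above spectra and $\LTm\Lnpf=0$. I would then pass to $\En$: $\LTm\KK$ is a module over $\LTm\KK(\BPn)=0$ for any $\BPn$-algebra, so the vanishing holds for $\BPn$-algebras. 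I would take the $\mathbb{E}_3$-$\BPn$-algebra structure on a suitable form of $\En$ (or on its periodic/Morava version) to conclude $\LTm\KK(\En)=0$; alternatively I would use that $\LTm\KK$ kills the map from $\BPn$ to its localization by purity.

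\textbf{Main obstacle.} I expect the hardest step to be the descent. Showing that vanishing for $\En$-linear categories implies vanishing for arbitrary $\Tn$-local $\CC$ requires that $\LTm\KK$ satisfy descent along $\SS_{\Tn}\to\En$ at least in the weak "nil-conservative" sense. My first attempt would be the module argument above: $\LTm\KK(\CC)$ is a module over the $\mathbb{E}_\infty$-ring $\LTm\KK(\Perf(\SS_{\Tn}))$, so it suffices to show that this ring vanishes, i.e.\ that its unit is nilpotent. I would attempt this by showing that $\LTm\KK(\SS_{\Tn})\to\LTm\KK(\En)=0$ detects nilpotence, using that $\SS_{\Tn}\to \En$ is descendable in the $\Tn$-local category (Hopkins--Ravenel). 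By the standard argument, $\LTm\KK$ applied to the descendable algebra is then nil-conservative on rings, forcing the source ring to vanish as well.
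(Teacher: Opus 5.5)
Your treatment of $\En$ matches the paper's \cref{QL-En}: an $\bbE_3$-map $\BPn\to E$ with $E$ an $\bbE_1$-form of $\En$. The existence of this map is a genuine input, which the paper takes from \cite[Proposition 8.3]{ABM}. Your reduction to the vanishing of the ring $\LTm\KK(\STn)$ by linearity is also, in spirit, what the paper does (\cref{fracture}, \cref{main-thm}).

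\textbf{The gap is the descent from $\En$ to $\STn$.} Your key claim, that $\STn\to\En$ is descendable in $\SpTn$ by Hopkins--Ravenel, is false for $n\geq 2$. Hopkins--Ravenel give descendability of $\Ln\SS\to\En$, i.e.\ only $\Kn$-locally. If $\STn$ lay in the thick $\otimes$-ideal of $\SpTn$ generated by $\En$, then every $\Tn$-local $X$ with $\En\otimes X=0$ would vanish. For $\Tn$-local $X$ this condition is equivalent to $\Kn_*X=0$, since such spectra are $\KK(i)$-acyclic for $i<n$. That is exactly the height-$n$ telescope conjecture, which Burklund--Hahn--Levy--Schlank disproved for all $n\geq 2$.

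So for $\Lnf$-local categories this route is blocked, not merely hard. The difference between $\STn$ and $\SKn$ consists of height-$n$, $\Kn$-acyclic phenomena that are invisible to $\En$ and out of reach of your induction. Even a $\Kn$-local descent theorem would therefore only reach $\Ln$-local categories.

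Moreover, the step ``descendable, hence $\LTm\KK$ is nil-conservative by the standard argument'' is not formal. Descendability is a statement inside $\Mod_A$, and $\KK$ is not an exact functor of the module variable, so thick-ideal membership does not transfer to $\Mod_{\KK(A)}$. Writing $\CC$ as a totalization of $\En$-linear categories does not help either, since $\KK$ does not commute with totalizations. Descent results of this type for telescopically localized K-theory are substantial theorems about finite group actions; you would have to state and prove the one you use.

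\textbf{What the paper does instead.} Two ideas are missing: replacing the profinite extension $\STn\to\En$ by a \emph{finite} one, and descending mere vanishing along a finite group without any descent equivalence.
\begin{enumerate}
\item Burklund--Clausen--Levy's $\Tn$-local algebraic closure (\cref{BCL}) gives $\En\simeq\LTn\An$, where $\An$ is the filtered colimit of the finite $\Tn$-local Galois extensions $R$ of $\STn$.
\item The fracture square gives $\LTm\KK(\An)\simeq\LTm\KK(\En)=0$.
\item $\Tm\otimes\KK(\An)\simeq\colim_R \Tm\otimes\KK(R)$ is a filtered colimit of rings along unital maps, and $\pi_0$ commutes with filtered colimits. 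Hence the unit already dies at some finite stage, so some finite $G$-Galois $R$ has $\LTm\KK(R)=0$ (\cref{some-vanishing}).
\item Since $\SpTn\simeq(\cMod_R)^{hG}$, vanishing descends by the norm argument of \cref{descent}. The localized equivariant K-theory $\LTm\KK_G(\cModdbl_R)$ is a normed $G$-ring. The counit $N_e^G(\LTm\KK(\cModdbl_R))\to\LTm\KK_G(\cModdbl_R)$ gives, on $G$-fixed points, a ring map into $\LTm\KK(\SpTn^\dbl)$ from a ring that vanishes.
\item One must still pass from $\SpTn^\dbl$ to $\Perf(\STn)$, which your proposal never addresses. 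The paper uses the Verdier sequence $\Perf(\STn)\to\SpTn^\dbl\to\DD$, with $\DD$ being $\Lnmf$-local, together with the inductive hypothesis (\cref{STn}).
\end{enumerate}
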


Almost concurrently, Hahn--Wilson proved a higher height analogue of the Quillen--Lichtenbaum conjecture \cite{HW}, later revisited by Angelini-Knoll \cite{AK-BPn}.
They constructed a certain $\bbE_3$-form of the truncated Brown--Peterson spectrum $\BPn$ for which they proved the following.

\begin{thm}[{\cite[Theorem B]{HW}}]\label{QL}
	The following map has a bounded above fiber
	\[
		\KK(\BPn)_{(p)} \too \Lnpf \KK(\BPn).
	\]
\end{thm}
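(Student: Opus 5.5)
The plan is to follow the trace-method strategy: reduce the statement to a finiteness assertion for topological cyclic homology modulo a type $n{+}2$ complex, and then prove that assertion through a ``Segal conjecture'' for $\THH(\BPn)$. The $\bbE_3$-structure on $\BPn$ is used to make $\THH$ relative to a polynomial base computable.

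\textbf{Reduction to a finite complex.} Fix a $p$-local finite complex $F$ of type $n{+}2$ carrying a $v_{n+1}$-self map. Since $\Lnpf$ annihilates $F$, the fiber $C$ of $\KK(\BPn)_{(p)} \to \Lnpf \KK(\BPn)$ satisfies $C \otimes F \simeq \KK(\BPn) \otimes F$.

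Conversely, I will use the standard lemma that for a bounded below $p$-local spectrum $X$, bounded aboveness of $X \otimes F$ implies that $X \to \Lnpf X$ has bounded above fiber. This is an ``fp-type''-style argument: $C$ is built from colimits of $X \otimes F$-type pieces through the telescope/cofiber construction of $\Lnpf$, with uniform degree bounds. This lemma I plan to take as input rather than prove here. It therefore suffices to show that $\KK(\BPn)\otimes F$ is bounded above.

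\textbf{Dévissage to $\TC$.} The map $\BPn \to \ZZ_{(p)}$ is a map of connective ring spectra that is an isomorphism on $\pi_0$. By Dundas--Goodwillie--McCarthy, the square formed by $\KK$ and $\TC$ of $\BPn$ and of $\ZZ_{(p)}$ (or of $\Zp$ after $p$-completion) is a pullback. Tensoring with $F$, the following two terms are bounded above:
\begin{itemize}
	\item $\KK(\ZZ_{(p)})\otimes F$, by classical Quillen--Lichtenbaum (Voevodsky--Rost, via Waldhausen's reformulation recalled above, since height $1 \le n{+}1$);
	\item $\TC(\Zp)\otimes F$, by the Bökstedt--Madsen computation.
\end{itemize}
Hence it suffices to prove that $\TC(\BPn)\otimes F$ is bounded above.

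Write $\TC$ as the fiber of $\mathrm{can}-\varphi\colon \TCm \to \mrm{TP}$. It then suffices to show that, after tensoring with $F$ (or with $\SS/(p,v_1,\dots,v_{n+1})$ when this makes sense), the map
\[
	\varphi\colon \THH(\BPn)^{hS^1} \too \THH(\BPn)^{tS^1}
\]
is an isomorphism in all sufficiently large degrees. Indeed, once this holds, $\mathrm{can}$ shifts degrees in a controlled way relative to $\varphi$, so $\mathrm{can}-\varphi$ is an equivalence in high degrees and its fiber is bounded above. This in turn reduces, by a standard argument through homotopy fixed point and Tate spectral sequences, to the Segal-conjecture statement: the cyclotomic Frobenius
\[
	\THH(\BPn)\otimes F \too \THH(\BPn)^{tC_p}\otimes F
\]
is truncated, i.e.\ an isomorphism in high degrees.

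\textbf{The main obstacle: the Segal conjecture for $\THH(\BPn)$.} I expect this step to be the hardest, and I would attack it with the even (motivic) filtration.

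First, use the $\bbE_3$-structure to make $\BPn$ an algebra over a polynomial $\bbE_2$-ring $\SS[t_1,\dots,t_n] \to \mrm{MU}$ through which the generators $v_i$ are defined. Then compute the relative $\THH(\BPn/\mrm{MU})$, which has an explicit polynomial-on-a-$\sigma$-class description mod $(p,v_1,\dots,v_n)$, analogous to Bökstedt periodicity.

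Next, pass to associated gradeds of the even filtration on $\THH$, $\TCm$ and $\mrm{TP}$, where the homotopy is computed by explicit spectral sequences. On the associated graded, the Frobenius mod $(p,v_1,\dots,v_n)$ should send the $\sigma$-class to a unit multiple of $v_{n+1}$-inverted data, and so become an isomorphism above an explicit line of slope determined by $|v_{n+1}|$.

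Finally, convergence of the even filtration, which holds because everything is bounded below and even, lifts the graded statement to the claimed high-degree isomorphism. The delicate points are exactly the ones where the $\bbE_3$ hypothesis is needed: identifying the Frobenius on the graded pieces, and controlling the extension problems so that the mod $(p,\dots,v_n)$ statement still implies truncatedness after tensoring with $F$.
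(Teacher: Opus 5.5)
The paper does not prove this statement; it cites it from Hahn--Wilson and uses it as a black box, so your outline should be measured against their trace-method argument. Your overall shape matches theirs, and your first reductions are fine. The lemma turning bounded-aboveness of $\KK(\BPn)\otimes F$ into a bounded above fiber of $\KK(\BPn)_{(p)}\to\Lnpf\KK(\BPn)$ is true (induct down through the $v_k$-self maps, using that the fiber is $\Lnpf$-acyclic). Dundas--Goodwillie--McCarthy along $\BPn\to\ZZ_{(p)}$, with the height $\le 1$ inputs, then reduces you to $\TC(\BPn)\otimes F$.

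The gap is the step ``once $\varphi$ is an isomorphism in high degrees mod $F$, $\mathrm{can}-\varphi$ is an equivalence in high degrees.'' This is false in general, so no formal argument about how $\mathrm{can}$ ``shifts degrees relative to $\varphi$'' can supply it. Take $\SS$ itself. The Frobenius $\THH(\SS)=\SS\to\SS^{tC_p}$ is $p$-completion (the Segal conjecture for $C_p$), so $\varphi\colon\TCm(\SS)\otimes F\to\mrm{TP}(\SS)\otimes F$ is an equivalence in every degree. Yet $\SS$ is a retract of $\TC(\SS)$, split by $\TC(\SS)\to\THH(\SS)=\SS$. Hence $\TC(\SS)\otimes F$ contains $F$ as a retract and is not bounded above, since $\LTm F\neq 0$ for $m\geq n{+}2$. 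So the Segal conjecture cannot by itself yield a Lichtenbaum--Quillen statement.

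What is missing is a second, independent input controlling the canonical map, namely a canonical vanishing statement. In large degrees and modulo a type $n{+}2$ complex, $\mathrm{can}\colon\TCm(\BPn)\otimes F\to\mrm{TP}(\BPn)\otimes F$ must vanish (or at least $\varphi^{-1}\circ\mathrm{can}$ must be nilpotent), so that $\mathrm{can}-\varphi$ is an isomorphism there.

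The prototype is height $-1$. For $\Fp$ one has $\TCm_*(\Fp)=\Zp[t,u]/(tu-p)$ with $\varphi(u)=t^{-1}$ but $\mathrm{can}(u)=p\,t^{-1}$. So modulo $p$ the canonical map is zero in positive degrees while $\varphi$ is an isomorphism, and $\TC(\Fp)/p$ is bounded above.

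For $\BPn$ this is exactly where the height of $\BPn$ must enter: $v_{n+1}$ maps to zero in $\BPn$ and is detected by $t\,\sigma^2 v_{n+1}$ in the homotopy fixed point spectral sequence. Establishing the vanishing means actually running the homotopy fixed point and Tate spectral sequences modulo $(p,v_1,\dots,v_{n+1})$ in high degrees, not just analysing the Frobenius on $\THH$.

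Separately, your convergence justification for the even filtration does not apply as stated. $\THH(\BPn)$ is not even, and for an $\bbE_3$-form of $\BPn$ it is only $\bbE_2$. Hahn--Wilson instead compute $\THH$ relative to $\mrm{MU}$ and descend.
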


Yuan proved the redshift lower bound for the Lubin--Tate spectrum $\En$ \cite{yuan}.
Burklund--Schlank--Yuan subsequently extended this to all commutative ring spectra using their chromatic nullstellensatz, which expresses one sense in which Lubin--Tate spectra behave like algebraically closed fields \cite{null}.

In this paper, we give a new proof of \cref{main-thm-intro} by descent from $\En$.
The vanishing for the Lubin--Tate spectrum follows from the Quillen--Lichtenbaum result of \cref{QL}, and the passage to the general case relies on the following more familiar sense in which $\En$ is algebraically closed.
The pioneering work of Devinatz--Hopkins \cite{DH}, together with subsequent work of Rognes, Baker--Richter, and Mathew \cite{RognesGalois,BR-Galois,MatGal}, shows that $\En$ is the maximal Galois extension of the $\Kn$-local sphere $\SKn$.
Recent unpublished work of Burklund--Clausen--Levy shows that $\STn$ has the same Galois theory as $\SKn$, and that its algebraic closure coincides with $\En$ as well, which in particular gives the following.

\begin{thm}[{Burklund--Clausen--Levy}]\label{BCL}
	The $\Tn$-local filtered colimit of the $\Tn$-local finite Galois extensions $R$ of $\STn$ is $\En$, that is, there is an isomorphism
	\[
		\LTn(\colim_{R/\STn} R) \iso \En
		\qin \CAlg(\SpTn).
	\]
\end{thm}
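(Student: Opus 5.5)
The plan is to compare the Galois theory of $\STn$ with that of $\SKn$, where the answer is known: by Devinatz--Hopkins together with Rognes, Baker--Richter and Mathew, the $\Kn$-local finite Galois extensions of $\SKn$ are exactly the $\En^{hU}$ for open normal $U$ in the Morava stabilizer group $\GG_n$, and their $\Kn$-local colimit is $\En$. The argument splits into two steps. First, show that $\LKn\colon \CAlg(\SpTn) \to \CAlg(\SpKn)$ induces an equivalence on finite Galois extensions. Second, show that the $\Tn$-local colimit of the Galois extensions of $\STn$ is already $\Kn$-local, so that it agrees with the $\Kn$-local colimit $\En$.

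\textbf{Galois extensions.} A $\Tn$-local finite Galois extension $R$ of $\STn$ is dualizable, so $\LKn R$ is a finite Galois extension of $\SKn$. This gives a functor of Galois categories. I expect full faithfulness to be the easier direction: maps between Galois extensions are detected by idempotents in $\pi_0$ of their tensor products, and the plan is to check that $\pi_0$-idempotents are insensitive to passing from $\Tn$-local to $\Kn$-local. For essential surjectivity, I would lift each $\En^{hU}$ to a $\Tn$-local étale $\STn$-algebra. The route is to write $\SKn$ as a completion of $\STn$ (with respect to the ideal generated by lifts of $p, v_1, \dots, v_{n-1}$ on a generalized Moore spectrum) and to use étale rigidity: the obstruction groups for deforming étale algebras along square-zero or adic filtrations vanish. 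Concretely, one filters $\STn$ by the $\Tn$-local Moore tower $\STn \otimes M(p^{i_0}, \dots, v_{n-1}^{i_{n-1}})$, where telescopic and $\Kn$-local localizations coincide after tensoring with a type-$n$ complex. One lifts the Galois extension uniquely through each stage and then passes to the limit.

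\textbf{Colimit.} Set $A = \LTn(\colim_R R)$. There is a natural map $A \to \En$. To show it is an isomorphism I would check that $A$ is $\Kn$-local; then it coincides with the $\Kn$-local colimit, which is $\En$. Since $\Tn$- and $\Kn$-locality agree on modules killed by a type-$n$ complex, it suffices to show the map $A \to \LKn A$ is an equivalence. I would attempt this by showing that after base change along some faithful map it becomes the equivalence $\colim_U C(\GG_n/U, \En) \simeq C^{\mathrm{cts}}(\GG_n, \En)$, which uses the Galois property of each $R$.

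\textbf{Main obstacle.} I expect the main difficulty to be finding a faithful, descendable base-change target in $\SpTn$. The obvious candidate $\En$ is not a priori descendable over $\STn$, since the telescope and $\Kn$-local categories differ. My first attempt would be to use the Galois extensions themselves: they are descendable over $\STn$ because they are faithful and dualizable. The remaining task would then be to control the filtered colimit, for which I would use compatibility of $\Tn$-localization with filtered colimits of dualizables after tensoring with a type-$n$ complex.
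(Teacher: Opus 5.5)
The paper gives no proof of this statement. It is imported as a black box from unpublished work of Burklund--Clausen--Levy (see the remark following it), so your proposal has to stand on its own. Your two-step shape matches how the paper describes that result: first the same Galois theory as $\SKn$, then identifying the colimit with $\En$. However, the mechanism you propose for both steps rests on a false premise. You assume that telescopic and $\Kn$-local localization agree after tensoring with a type-$n$ complex, i.e.\ $\STn \otimes M_I \simeq \SKn \otimes M_I$ for the generalized Moore spectra $M_I$. Since $\STn \otimes M_I \simeq \LTn M_I \simeq \Lnf M_I$ and $\SKn \otimes M_I \simeq \LKn M_I \simeq \Ln M_I$, this is exactly the telescope conjecture at height $n$. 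Burklund--Hahn--Levy--Schlank disproved it for every $n \geq 2$; for $n \leq 1$ it holds, but then $\STn \simeq \SKn$ and the statement reduces to the $\Kn$-local one. Indeed, if it held for all $I$, passing to the limit would give $\STn \simeq \lim_I \LTn M_I \simeq \lim_I \LKn M_I \simeq \SKn$.

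For the same reason, $\SKn$ is not an adic completion of $\STn$. $\STn$ is already complete along the Moore tower, and the difference between the two spheres is present at every finite stage. It appears as the map $\LTn M_I \to \LKn M_I$, which is a $\Kn$-localization with nonzero $\Kn$-acyclic fiber, not a nilpotent or square-zero thickening. So there is no deformation problem for \'etale rigidity to solve. Compare the localization $\ZZ \to \QQ$: its relative cotangent complex vanishes, yet \'etale $\QQ$-algebras such as $\QQ(i)$ do not lift to $\ZZ$. This leaves essential surjectivity without an argument, and that is where the content of the theorem lies: it forces every finite Galois extension of $\SKn$ to lift, at least cofinally. Even lifting the finite abelian ones required a genuinely telescopic construction, the higher-semiadditive cyclotomic extensions of Carmeli--Schlank--Yanovski.

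Your other two steps are also unsupported. Full faithfulness via idempotents is asserted without argument. It amounts to saying that $\LKn$ neither kills factors of, nor creates new idempotents in, $\Tn$-local Galois extensions. Localizations need not preserve connectedness of finite \'etale covers: a connected cyclic \'etale cover of the affine nodal curve splits after inverting a function that vanishes only at the node.

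For the colimit, note that for $n \geq 2$ no individual $R$ is $\Kn$-local, since otherwise $\STn \simeq R^{hG}$ would be. So you need a mechanism by which the non-$\Kn$-local parts die in the colimit. Base change along the Galois extensions cannot provide it. Because $R \to A$ is an algebra map, $\LTn(R \otimes A) \simeq \prod_G A$, so descent along $R$ only reduces the $\Kn$-locality of $A$ to itself. Likewise, tensoring with a type-$n$ complex merely presents $A \otimes M_I$ as the colimit of the $R \otimes M_I$, none of which is $\Kn$-local. The identification $A \simeq \En$ is therefore left open.
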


\begin{remark}
	Our proof relies on this unpublished result.
	Using instead the $\Kn$-local algebraic closure statement, the argument applies to $\Ln$-local categories.
\end{remark}

We briefly outline the proof.
We start from the vacuous case $n = -1$, where we take $\Lnf$ to be the zero functor, and proceed by induction on the height $n$ as follows:

\begin{steps}
	\item \cref{QL} implies the vanishing for $\En$ via the map from $\BPn$ (\cref{QL-En}).
	\item Since K-theory preserves filtered colimits, and a ring spectrum vanishes if and only if its unit vanishes, \cref{BCL} then gives the vanishing for some finite Galois extension of $\STn$ (\cref{some-vanishing}).
	\item We deduce the vanishing for $\STn$ using a general vanishing descent principle, which may be of independent interest (\cref{descent}, \cref{SpTn-dbl}).
	\item The vanishing for $\STn$ together with the vanishing for $\Lnmf\SS$ from the inductive hypothesis implies the vanishing for $\Lnf\SS$ using the telescopic fracture square (\cref{fracture}, \cref{LnfS}).
	\item An $\Lnf$-local category is a module over $\Perf(\Lnf\SS)$, hence the same is true for their K-theories, so the vanishing of the latter implies the vanishing of the former (\cref{main-thm}).
\end{steps}

A subtlety suppressed in this outline is the distinction between ordinary and $\Tn$-local rings and their module categories.
The telescopic fracture square, together with the inductive hypothesis, shows that this discrepancy is invisible to K-theory localized at the relevant chromatic heights (\cref{fracture}).

\subsection*{Acknowledgements}

The proof in this paper was inspired by Elmanto--Nardin--Yang's proof of Mitchell's theorem \cite{ENY}.
I thank Shachar Carmeli and Ishan Levy for pointing out an oversight in the proof of \cref{some-vanishing} appearing in an earlier version of this paper.
I also thank John Rognes and Tomer Schlank for helpful exchanges.
This work was supported by the Max Planck--Weizmann joint postdoctoral program.

I used ChatGPT 5.6 throughout this project, including for locating references and streamlining the exposition, and, notably, for suggesting the proof of \cref{descent}.

	\section*{The Proof}

\begin{indhyp}\label{inductive-assumption}
	We assume that \cref{main-thm-intro} has been proved for height $n-1$.
\end{indhyp}

\begin{notn}
	For a $\Tn$-local ring spectrum $R$, we denote by $\cMod_R$ the category of $\Tn$-local $R$-modules.
\end{notn}

We begin by recording the following statement, which we shall use twice.

\begin{prop}\label{fracture}
	For any ring spectrum $R$, the following is a pullback square
	\[\begin{tikzcd}
		{\KK(\Lnf R)} & {\KK(\LTn R)} \\
		{\KK(\Lnmf R)} & {\KK(\Lnmf\LTn R)}
		\arrow[from=1-1, to=1-2]
		\arrow[from=1-1, to=2-1]
		\arrow["\lrcorner"{anchor=center, pos=0.125}, draw=none, from=1-1, to=2-2]
		\arrow[from=1-2, to=2-2]
		\arrow[from=2-1, to=2-2]
	\end{tikzcd}\]
	Consequently, for any $m \geq n{+}2$ the top map induces an isomorphism
	\[
		\LTm \KK(\Lnf R) \iso \LTm \KK(\LTn R).
	\]
\end{prop}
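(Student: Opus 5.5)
We will obtain the pullback square by applying $\KK$ to a pullback square of ring spectra arising from the telescopic fracture square, and then invoking a localization/Milnor-type excision result for K-theory.

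\textbf{The square of ring spectra.} For any spectrum $X$ there is the chromatic fracture pullback
\[
	\Lnf X \simeq \LTn X \times_{\Lnmf \LTn X} \Lnmf X,
\]
coming from the facts $\Lnf$-acyclics $=$ $\Lnmf$-acyclics $\cap$ $\Tn$-acyclics and $\LTn \Lnf \simeq \LTn$. Apply this to $X = R$. All maps are maps of ring spectra, since the localizations are smashing ($\Lnf$ and $\Lnmf$) or lax monoidal ($\LTn$). Moreover, $\Lnmf\LTn R \simeq \Lnmf\SS\otimes \LTn R$. Hence the square is a pullback of rings in which the bottom right corner is obtained from the top right corner by the smashing localization $\Lnmf$.

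\textbf{Passing to K-theory.} The key input is that K-theory sends such squares to pullbacks. Concretely, let $A = \Lnf R$. Then $\Lnmf R = \Lnmf A$ is a smashing localization of $A$ with kernel $\Perf(A)$-modules that are $\Lnmf$-acyclic. Likewise $\Lnmf\LTn R = \Lnmf B$ with $B = \LTn R$, and $B \simeq A \otimes_{\Lnf\SS}\LTn\SS$.

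The plan is to use the localization sequences
\[
	\KK(\Perf(A)^{\Lnmf\text{-ac}}) \to \KK(A) \to \KK(\Lnmf A),
\]
and similarly for $B$. Because $\KK$ of Verdier quotients of idempotent-complete categories is computed by the Karoubi sequence, we will check that base change $A \to B$ induces an equivalence on the categories of compact $\Lnmf$-acyclic modules. The $\Lnmf$-acyclic compact $A$-modules are $\Lnf\SS$-modules that are $\Lnmf$-acyclic, i.e. $\Mnf$-type. For these, $M \to \LTn M = M\otimes_A B$ is an equivalence, because for $\Lnf$-local $\Lnmf$-acyclic modules, $\Tn$-localization is the identity. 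This gives fully faithfulness and essential surjectivity on compacts, after comparing the compact acyclic objects via the monogenic generation by $A\otimes F(n)$ type objects.

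Equivalently, we can cite Land--Tamme's theorem: a pullback square of ring spectra of this form (a $\otimes$-excision / "Milnor square" where the fiber of the horizontal maps agree) induces a pullback on $\KK$, provided the $\odot$-ring comparison is trivial. This is the case exactly when the base change of the localization is an equivalence on the acyclic parts, which is what was just checked. This verification is the main obstacle. We expect to handle it via the observation that $\Lnmf$-acyclic $\Lnf$-local objects are $\Tn$-local.

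\textbf{The consequence.} Apply $\LTm$, which is exact, to the pullback. It remains to see that the bottom map becomes an equivalence after $\LTm$ for $m\ge n{+}2$. Both $\Perf(\Lnmf R)$ and $\Perf(\Lnmf\LTn R)$ are $\Lnmf$-local categories. By the Inductive Hypothesis (\cref{main-thm-intro} at height $n-1$), their K-theories are $\Tm$-acyclic for all $m \ge n{+}1$, in particular for $m\ge n{+}2$. Hence the bottom row vanishes after $\LTm$, and the pullback square yields $\LTm\KK(\Lnf R)\iso\LTm\KK(\LTn R)$ via the top map.
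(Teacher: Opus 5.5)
\textbf{Verdict: genuine gap at the step you call the main obstacle; it is fixable, and the paper's route avoids it.}

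Your setup is correct. Your first route identifies the vertical fibers with $\KK$ of compact $\Lnmf$-acyclic modules and compares these under base change along $A=\Lnf R\to B=\LTn R$. This is essentially the CMNN argument the paper mentions. The problem is the justification you give for the key step: $\Lnf$-local, $\Lnmf$-acyclic objects are \emph{not} $\Tn$-local in general.

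For $n\ge 1$, take the $\Lnf\SS$-module $\Mnf\SS=\mathrm{fib}(\Lnf\SS\to\Lnmf\SS)$. It is $\Lnf$-local and $\Lnmf$-acyclic. If it were $\Tn$-local, it would coincide with $\LTn\Mnf\SS\simeq\STn$. That is impossible, since $\Mnf\SS$ is rationally trivial and $\STn$ is not. The monochromatic layer is only \emph{equivalent} to $\SpTn$, not equal to it. Similarly, $B\simeq A\otimes_{\Lnf\SS}\LTn\SS$ fails for general $R$, because $\LTn$ is not smashing. So the verification does not go through as you justify it.

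What is true, and suffices, is the statement for \emph{compact} modules. It has to be extracted from the generation by $A\otimes F(n)$ that you mention only in passing:
\begin{itemize}
\item The compact $\Lnmf$-acyclic $A$-modules form the thick subcategory generated by $A\otimes F(n)\simeq R\otimes\Lnf F(n)$.
\item If you choose $F(n)$ to be a ring spectrum, then $\Lnf F(n)$ is a telescope $\Tn$. So $R\otimes\Tn$ is a $\Tn$-module, hence $\Tn$-local, and so is its whole thick closure.
\item For compact $M$ one has $M\otimes_A B\simeq\LTn M$, by a thick-subcategory argument starting from $A\to\LTn A=B$.
\end{itemize}
With these facts, your fully faithful/essentially surjective argument works.

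The paper needs none of this. Land--Tamme's Corollary~1.4 only requires the canonical map $\Lnmf R\otimesu_{\Lnf R}\LTn R\to\Lnmf\LTn R$ to be an equivalence. That is immediate from $\Lnmf$ being smashing, which is exactly the identification $\Lnmf\LTn R\simeq\Lnmf\SS\otimes\LTn R$ you already wrote down. Reading the Land--Tamme condition as ``an equivalence on the acyclic parts'' is what led you into the harder check.

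Your deduction of the consequence from the Inductive Hypothesis is fine and matches the paper.
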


\begin{proof}
	As in the proof of \cite[Corollary 4.11]{DescVan}, this follows from the fact that both vertical fibers are the K-theory of the $n$-monochromatization of $\Perf(R)$.
	We give another, closely related proof.

	The telescopic fracture says that the following is a pullback square of ring spectra
	\[\begin{tikzcd}
		{\Lnf R} & {\LTn R} \\
		{\Lnmf R} & {\Lnmf\LTn R}
		\arrow[from=1-1, to=1-2]
		\arrow[from=1-1, to=2-1]
		\arrow["\lrcorner"{anchor=center, pos=0.125}, draw=none, from=1-1, to=2-2]
		\arrow[from=1-2, to=2-2]
		\arrow[from=2-1, to=2-2]
	\end{tikzcd}\]
	By \cite[Corollary 1.4]{LT}, it then remains to show that the map of spectra
	\[
		\Lnmf R \otimesu_{\Lnf R} \LTn R \too \Lnmf\LTn R
	\]
	is an isomorphism.
	This follows from the fact that $\Lnmf$ is a smashing localization
	\[
		\Lnmf R \otimesu_{\Lnf R} \LTn R
		\simeq (\Lnmf\SS \otimes \Lnf R) \otimesu_{\Lnf R} \LTn R
		\simeq \Lnmf\SS \otimes (\Lnf R \otimesu_{\Lnf R} \LTn R)
		\simeq \Lnmf\SS \otimes \LTn R
		\simeq \Lnmf\LTn R.
	\]
	For the consequence, note that by the \hyperref[inductive-assumption]{Inductive Hypothesis}, the two bottom terms vanish $\Tm$-locally.
\end{proof}

We now turn our attention to the proof, proceeding by a series of reductions.

\begin{prop}\label{QL-En}
	$\LTm \KK(\En) = 0$ for all $m \geq n{+}2$.
\end{prop}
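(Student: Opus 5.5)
The plan is to deduce the vanishing for $\En$ from \cref{QL} by passing through the map of ring spectra $\BPn \to \En$.

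\textbf{Step 1: vanishing for $\BPn$.} By \cref{QL}, the fiber $F$ of $\KK(\BPn)_{(p)} \to \Lnpf\KK(\BPn)$ is bounded above. For $m \geq n{+}2$, the functor $\LTm$ kills bounded above spectra. It also kills $\Lnpf$-local spectra, since $\Lnpf$ is a smashing localization annihilating $\Tm$ for $m > n{+}1$. Since $p$-localization does not affect $\LTm$, we obtain
\[
\LTm\KK(\BPn) = 0 \quad \text{for all } m \geq n{+}2 .
\]

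\textbf{Step 2: transfer to $\En$.} I would use the fact that $\KK(\BPn)$ is an $\bbE_2$-ring, since $\BPn$ is $\bbE_3$. Moreover $\LTm \KK(\BPn)$ is a ring whose unit is zero. Choose a map of $\bbE_1$-rings $\BPn \to \En$, for instance the complex orientation map after appropriate completion. Then $\KK(\En)$ is a module over $\KK(\BPn)$, at least in the weak sense that the unit map of the ring $\KK(\En)$ factors through the ring map $\KK(\BPn) \to \KK(\En)$.

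After $\Tm$-localization, $\LTm\KK(\En)$ receives a unital ring map from $\LTm\KK(\BPn) = 0$. Hence its unit is zero, and so $\LTm\KK(\En)$ vanishes. This requires only that $\LTm\KK(\En)$ is a homotopy ring spectrum, which holds because $\LTm$ is lax monoidal and $\KK$ of an $\bbE_1$-ring is an $\bbE_1$-ring.

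\textbf{Main obstacle.} The subtle point is the existence of a ring map $\BPn \to \En$ from the specific $\bbE_3$-form of Hahn--Wilson. Only an $\bbE_1$ (even merely homotopy-unital ring) map is needed, since $\KK$ is functorial for $\bbE_1$-maps.

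I expect this to follow from the Hahn--Wilson construction. They build their $\BPn$ as an $\bbE_3$-$\mrm{MU}$-algebra, and $\En$ admits a complex orientation. So I would try obtaining an $\bbE_1$-$\mrm{MU}$-algebra map $\BPn \to \En$ via the obstruction theory for $\bbE_1$-$\mrm{MU}$-algebra maps into even Landweber-exact targets, where the obstruction groups vanish by evenness. The chosen formal group should have height exactly $n$ on the target, for example by taking $\En$ for a suitable height-$n$ formal group.

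If $\En$ is meant for a specific formal group, I would reduce to this case. Any two Lubin--Tate theories of height $n$ become equivalent after passing to a faithful extension such as $\Fpbar$ coefficients. Alternatively, $\En$ for one formal group is a $\KK(n)$-local retract-type extension of another, and vanishing of the unit transfers along ring maps in either direction.
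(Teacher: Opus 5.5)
Your proof is correct in outline and follows the paper's skeleton: \cref{QL} kills $\LTm\KK(\BPn)$, and one then transfers along a map $\BPn\to\En$. The transfer step, however, is genuinely different. The paper takes an $\bbE_3$-map $\BPn\to E$ from \cite[Proposition 8.3]{ABM}, where $E$ is an $\bbE_3$-ring whose underlying $\bbE_1$-ring is $\En$. Then $\LTm\KK(\BPn)\to\LTm\KK(E)$ is an $\bbE_2$-ring map out of zero. Since $\KK$ only sees the underlying $\bbE_1$-ring, $\KK(E)\simeq\KK(\En)$ as spectra, which is enough.

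You instead use only an $\bbE_1$-map, together with the observation that base change sends $\BPn$ to $\En$. So the unit $[\En]$ of $\KK(\En)$ factors through $\KK(\BPn)$, and hence the unit of $\LTm\KK(\En)$ factors through $\LTm\KK(\BPn)=0$. This is valid and asks less of the map: the underlying $\bbE_1$-map of the cited $\bbE_3$-map suffices, and no auxiliary form $E$ is needed. The price is that you must use the Goerss--Hopkins--Miller $\bbE_\infty$-structure on $\En$ to know that $\LTm\KK(\En)$ is a ring. The paper's version gets the ring structure and the multiplicativity together from the $\bbE_3$-map.

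Several of your justifications need correcting:
\begin{itemize}
\item $\KK$ of an $\bbE_1$-ring is not an $\bbE_1$-ring. $\Perf(R)$ is only $\bbE_{k-1}$-monoidal when $R$ is an $\bbE_k$-ring, so the ring structure on $\KK(\En)$ must come from $\En$ being $\bbE_\infty$.
\item For an $\bbE_1$-map, the induced map $\KK(\BPn)\to\KK(\En)$ need not be a ring map. Fortunately you only use that it preserves the unit.
\item A mere homotopy ring map $\BPn\to\En$ does not induce a map on K-theory at all, so your parenthetical remark that such a map would suffice is wrong.
\item In your last paragraph, the claim that vanishing transfers along ring maps ``in either direction'' is false. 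Vanishing passes forward along unital maps, not backwards. Reducing from one Lubin--Tate theory to a faithful extension of it would require a descent argument in the spirit of \cref{descent}.
\end{itemize}

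Finally, the existence of a map out of the specific Hahn--Wilson form of $\BPn$ is the one substantive input besides \cref{QL}. Your obstruction-theoretic sketch is only a heuristic, not a proof of it; the paper obtains it by citation.
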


\begin{proof}
	By \cite[Proposition 8.3]{ABM}, there is a map
	\[
		\BPn \to E
		\qin \Alg_{\bbE_3}(\Sp)
	\]
	where $\BPn$ is the Hahn--Wilson form of $\BPn$, and $E$ is some $\bbE_3$-ring spectrum whose underlying $\bbE_1$-ring spectrum is $\En$.
	As a consequence, we have
	\[
		\LTm \KK(\BPn) \too \LTm \KK(E) \iso \LTm \KK(\En)
	\]
	where the first map is an $\bbE_2$-ring map, and the second map is an isomorphism of spectra.
	\cref{QL} shows that the first term vanishes since $\Tm$-localization vanishes on bounded above spectra for $m \geq 1$.
	Since the first map is a ring map, the second term vanishes as well, hence so does the third term.
\end{proof}

As recalled in the introduction, \cref{BCL} of Burklund--Clausen--Levy shows that $\En$ is the $\Tn$-localization of the filtered colimit \emph{in spectra} of the $\Tn$-local finite Galois extensions $R$ of $\STn$
\[
	\An := \colim_{R/\STn} R
	\qin \CAlg(\Sp),
	\qquad \LTn \An \iso \En.
\]

\begin{prop}\label{vanishing-An}
	$\LTm \KK(\An) = 0$ for all $m \geq n{+}2$.
\end{prop}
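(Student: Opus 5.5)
We deduce the statement from \cref{QL-En} by passing through the telescopic fracture square of \cref{fracture}, applied to $R = \An$.

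\textbf{Step 1: reduce to $\Lnf \An$.} Each $\Tn$-local finite Galois extension $R$ of $\STn$ is $\Tn$-local, hence $\Lnf$-local. Since $\Lnf$ is smashing, the class of $\Lnf$-local spectra is closed under colimits. Therefore the filtered colimit $\An$ is $\Lnf$-local, i.e.\ $\An \simeq \Lnf \An$. Note, however, that $\An$ need not be $\Tn$-local, since $\Tn$-localization does not commute with filtered colimits in general.

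\textbf{Step 2: apply \cref{fracture}.} With $R = \An$, \cref{fracture} gives, for every $m \geq n{+}2$, an isomorphism
\[
	\LTm \KK(\An) \simeq \LTm \KK(\Lnf \An) \iso \LTm \KK(\LTn \An).
\]
By \cref{BCL}, $\LTn \An \simeq \En$ as ring spectra, and hence as $\bbE_1$-rings, which is all K-theory needs. So the right-hand side is $\LTm \KK(\En)$, and this vanishes by \cref{QL-En}.

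\textbf{Main obstacle.} The delicate point is Step 1, namely justifying $\An \simeq \Lnf \An$. Smashing localizations have colimit-closed local categories because $\Lnf X \simeq \Lnf\SS \otimes X$ commutes with colimits. If $\Lnf$-locality of $\Tn$-local spectra were in doubt, I would instead use that $\Lnf$-acyclics are generated by finite type $>n$ spectra, and these are $\Tn$-acyclic, so there are no maps from them into $\Tn$-local spectra.

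One should also check that \cref{fracture} applies to an arbitrary ring spectrum with no locality hypothesis. It does: its proof only uses the telescopic fracture square for $\Lnf R$ together with the \hyperref[inductive-assumption]{Inductive Hypothesis} to kill the bottom row $\Tm$-locally.
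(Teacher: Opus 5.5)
Your proof is correct and follows essentially the same route as the paper. In both, $\An$ is $\Lnf$-local because $\Lnf$ is smashing, so \cref{fracture} identifies $\LTm \KK(\An)$ with $\LTm \KK(\LTn \An) \simeq \LTm \KK(\En)$, which vanishes by \cref{QL-En}.
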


\begin{proof}
	Note that $\An$ is $\Lnf$-local since $\Lnf$-localization is smashing, and we have $\LTn\An \iso \En$.
	Therefore, \cref{fracture} shows that $\An \to \En$ induces an isomorphism on $\Tm$-localized K-theory, so the result follows from \cref{QL-En}.
\end{proof}

Next, we pass from $\An$ to a finite Galois extension of $\STn$.
We learned this argument from Elmanto--Nardin--Yang's proof of Mitchell's theorem \cite{ENY}.

\begin{prop}\label{some-vanishing}
	For any $m \geq n{+}2$, there is a $\Tn$-local finite $G$-Galois extension $R$ of $\STn$, such that $\LTm \KK(R) = 0$, hence also $\LTm \KK(\cModdbl_R) = 0$.
\end{prop}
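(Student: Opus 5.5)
The plan is to pass from $\An$ to a single finite Galois extension by a compactness argument for the unit. The ring $\LTm \KK(\An)$ vanishes by \cref{vanishing-An}, so its unit is zero. Since K-theory commutes with filtered colimits of ring spectra, and $\An = \colim_{R/\STn} R$ is a filtered colimit in $\CAlg(\Sp)$, we get
\[
	\KK(\An) \simeq \colim_{R/\STn} \KK(R).
\]
Localization $\LTm$ is smashing on the telescopic side only in the finite sense. However, $\Tm$-localization agrees with $v_m$-telescope inversion on $\KK(-)\otimes F$ for a type $m$ finite complex $F$. Concretely, $\LTm X = 0$ iff $T(m)\otimes X=0$, where $T(m)=v_m^{-1}F$. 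The functor $X\mapsto \Tm\otimes X$ commutes with colimits, so
\[
	\Tm\otimes \KK(\An)\simeq \colim_R \Tm\otimes\KK(R).
\]

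The argument then runs as follows. We have $\Tm\otimes\KK(\An)=0$. Choose $F$ to be a type $m$ finite complex that admits a homotopy associative ring structure, with a $v_m$ self map $v$ central enough; such $F$ exist by Devinatz--Hopkins--Smith. Then $\Tm\otimes\KK(R)\simeq v^{-1}(F\otimes\KK(R))$ is a ring up to homotopy, and it vanishes iff its unit vanishes. The unit of the colimit is the image of the unit $1\in\pi_0(F\otimes \KK(\STn))$. Vanishing in the colimit $\colim_R \pi_*\,v^{-1}(F\otimes\KK(R))$ therefore means that, for some $R$ and some $k$, the element $v^k\cdot 1$ maps to zero in $\pi_*(F\otimes\KK(R))$. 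This is because homotopy groups commute with filtered colimits, and telescope inversion is itself a sequential colimit. Hence $v^{-1}(F\otimes\KK(R))$ is a ring with zero unit, so $\LTm\KK(R)=0$.

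A cleaner alternative avoids choosing ring structures on $F$. Note that $\LTm\KK(R)$ is an $\bbE_\infty$-ring and the unit lies in $\pi_0$. Using $\LTm X=0 \iff \Tm\otimes X=0$, together with the fact that $\Tm\otimes\KK(R)$ is a module over $\LTm\KK(R)$, it suffices to show that the unit of $\LTm\KK(R)$ dies. That unit factors through $\LTm\KK(\STn)$. I would make this precise by replacing $\LTm$ with $\Lmf$ restricted to the monochromatic layer, where smashing arguments apply. The main obstacle is exactly this step: commuting $\LTm$-vanishing of rings with the filtered colimit, since $\LTm$ is not smashing. The telescope and unit argument above is what I would try first. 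A compactness subtlety also appears: one must ensure a single $R$ works rather than just a compatible family, and the filtered colimit of groups handles this.

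Finally, the functor $R\mapsto\cModdbl_R$ gives $\KK(\cModdbl_R)$ as a $\KK(R)$-module, since $\cModdbl_R$ is a module over $\Perf(R)$ via the symmetric monoidal functor $\Perf(R)\to\cModdbl_R$. More simply, $\KK(\cModdbl_R)$ receives a ring map from $\KK(R)$, as $R$ is $\Tn$-local and dualizable objects are closed under the tensor. Thus $\LTm\KK(\cModdbl_R)$ is an $\LTm\KK(R)$-algebra, and it vanishes.
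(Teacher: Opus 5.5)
Your main argument is correct and is essentially the paper's proof. You detect vanishing of $\LTm$ by tensoring with a telescope $\Tm = v^{-1}F$ that is a homotopy ring (the paper takes the telescope of the endomorphism ring of a type $m$ complex), commute $\Tm \otimes \KK(-)$ with the filtered colimit, and use that the unit lies in $\pi_0$, which commutes with filtered colimits, to find a single $R$. The second claim then follows, as in the paper, from the ring map $\LTm\KK(R) \to \LTm\KK(\cModdbl_R)$ induced by the symmetric monoidal functor $\Perf(R) \to \cModdbl_R$. The ``cleaner alternative'' paragraph is unnecessary: the obstacle it names is already resolved by your first argument, since vanishing of $\LTm X$ is detected by $\Tm \otimes X$, which commutes with all colimits.
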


\begin{proof}
	Using \cref{vanishing-An}, together with the fact that K-theory preserves filtered colimits, we get
	\[
		\LTm(\colim_{R/\STn} \KK(R)) \iso \LTm\KK(\An) = 0
		\qin \CAlg(\SpTm).
	\]
	Choose $\Tm$ so that it is a ring spectrum.\footnote{This can be done by taking the telescope on the endomorphism ring of a finite type $m$ spectrum. See for example \cite[Proof of Lemma 2.3]{purity} for a more detailed argument.}
	Since having vanishing $\Tm$-localization is equivalent to being $\Tm$-acyclic, we get
	\[
		\colim_{R/\STn} (\Tm \otimes \KK(R)) \simeq \Tm \otimes (\colim_{R/\STn} \KK(R)) = 0
		\qin \Alg(\Sp).
	\]
	Since the colimit is along ring spectra maps, it preserves the unit element.
	Recall that a ring spectrum vanishes if and only if its unit vanishes.
	Therefore, since the sphere spectrum is compact, we conclude that there is some $R/\STn$ such that the unit element vanishes in $\Tm \otimes \KK(R)$, or, equivalently, $\LTm \KK(R) = 0$.

	The second part follows from the fact that the functor
	\[
		\Perf(R) \too \cModdbl_R
	\]
	is symmetric monoidal, hence induces a map of commutative ring spectra
	\[
		\LTm \KK(R) \too \LTm \KK(\cModdbl_R),
	\]
	so vanishing of the source implies vanishing of the target.
\end{proof}

In the next step we will use the following general vanishing descent principle.

\begin{prop}\label{descent}
	Let $G$ be a finite group, let $\CC \in \CAlg(\Catperf)^{\BB G}$, and let $Z$ be a spectrum, then
	\[
		L_Z \KK(\CC) = 0
		\qquad\text{if and only if}\qquad L_Z \KK(\CC^{hG}) = 0.
	\]
\end{prop}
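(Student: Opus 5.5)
The direction $L_Z \KK(\CC^{hG}) = 0 \Rightarrow L_Z \KK(\CC) = 0$ should be formal. The forgetful functor $\CC^{hG} \to \CC$ is symmetric monoidal, so $\KK(\CC^{hG}) \to \KK(\CC)$ is a map of commutative ring spectra. Applying $L_Z$ gives a map of ring spectra whose source vanishes, so the target vanishes too. This is the same unit argument used in \cref{QL-En} and \cref{some-vanishing}.

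For the converse I would package all the homotopy fixed points into one equivariant object. The assignment $G/H \mapsto \KK(\CC^{hH})$ carries three kinds of structure:
\begin{itemize}[label={}]
\item restrictions, from the forgetful functors;
\item transfers, from the induction functors $\mathrm{Ind}_H^K X = \bigoplus_{K/H} gX$, which are exact;
\item multiplicative norms, from tensor induction $X \mapsto \bigotimes_{K/H} gX$, available because $\CC$ is a commutative algebra with $G$-action.
\end{itemize}
Via spectral Mackey functors (Barwick and collaborators), this should assemble into a $G$-$\bbE_\infty$ ring spectrum $\KK_G(\CC)$ with $\KK_G(\CC)^H \simeq \KK(\CC^{hH})$ and underlying spectrum $\KK(\CC)$. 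I would then argue by induction on $|G|$, proving the statement for all proper subgroups first. A genuine $G$-spectrum $X$ has $X^G = 0$ as soon as $\Phi^H X = 0$ for every $H \leq G$, by isotropy separation. For $H$ proper, the geometric fixed points $\Phi^H$ of the $Z$-acyclized object are handled by the induction hypothesis applied to the restriction to $H$. It therefore remains to show that $L_Z \Phi^G \KK_G(\CC)$ vanishes, or more precisely that its unit does.

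The main obstacle is this top geometric fixed point, and I would attack it with the norm. For a $G$-$\bbE_\infty$ ring $R$ there is a ring map $R^e \to \Phi^G R$, the composite $R^e \to \Phi^G N_e^G R^e \to \Phi^G R$ of the Tate-diagonal-type map with the counit of the norm. The difficulty is that this map is multiplicative but not compatible with $L_Z$ in general. I would first treat the case where $Z$ may be replaced by a ring spectrum $T$, as with $\Tm$ in \cref{some-vanishing}. Then $L_Z \KK(\CC) = 0$ means that $1 = 0$ in $T \otimes \KK(\CC)$, witnessed by a finite amount of data. Norming that nullhomotopy should kill the unit of $N^G T \otimes \Phi^G$-type objects, and then one compares $\Phi^G N^G T$ with $T$ via the Tate diagonal to conclude $T \otimes \Phi^G \KK_G(\CC) = 0$.

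If this norm argument proves unworkable for arbitrary $Z$, I would fall back on a transfer argument. By Frobenius reciprocity, $\mathrm{Ind}\circ\mathrm{Res}$ is multiplication by $e = [\mathrm{Ind}_e^G 1]$, which becomes zero after $L_Z$ since it lies in the image of $L_Z \KK(\CC) = 0$. I would then try to show, inside $\pi_0 L_Z \KK(\CC^{hG})$, that $1$ lies in the radical of the ideal generated by $e$ together with the images of the transfers from proper subgroups. That would make the unit nilpotent and hence zero. Establishing this radical statement is exactly where the real content lies, and I expect it to be the hardest step.
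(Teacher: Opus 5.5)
\textbf{The gap: general $Z$ is not handled.} The statement is for an arbitrary spectrum $Z$. Your norm argument is only set up when $Z$ can be replaced by a ring spectrum $T$, and the fallback you offer for general $Z$ is not a reduction. Inside $\pi_0 L_Z\KK(\CC^{hG})$ the class $e=\mathrm{tr}_e^G(1)$ is already $0$, because the transfer factors through $L_Z\KK(\CC)=0$. By your induction hypothesis, all transfers from proper subgroups are $0$ as well. So ``$1$ lies in the radical of that ideal'' is literally the assertion $L_Z\KK(\CC^{hG})=0$.

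Restrictions, transfers and Frobenius reciprocity alone also cannot force the conclusion. The spectrum $\widetilde{EC_p}$ is an $\bbE_\infty$-ring in $C_p$-spectra with underlying spectrum $0$ and $C_p$-fixed points $\Phi^{C_p}\SS\simeq\SS$. What rules such examples out is the norm, so the norm argument has to carry the general case.

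\textbf{The repair: the ring structure on $T$ is never used.} Write $K=\KK_G(\CC)$ and assume $Z\otimes K^e=0$.
\begin{itemize}
\item $N_e^G$ is symmetric monoidal, and $N_e^G(0)=0$ since all its geometric fixed points are tensor powers of $0$. Hence $N_e^G Z\otimes N_e^G K^e\simeq N_e^G(Z\otimes K^e)=0$.
\item $K$ is a module over $N_e^G K^e$ via the counit. So $N_e^G Z\otimes K$ is a retract of $N_e^GZ\otimes N_e^GK^e\otimes K=0$, and therefore vanishes.
\item Apply $\Phi^G$ and use $\Phi^G N_e^G Z\simeq Z$. This is an equivalence, so no Tate-diagonal comparison is needed. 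The result is $Z\otimes\Phi^G K=0$.
\item Run the same argument for every $H\le G$ with $\mathrm{Res}^G_H K$. Let $\epsilon^*Z$ denote $Z$ with trivial action. Then all geometric fixed points $Z\otimes\Phi^H K$ of $\epsilon^*Z\otimes K$ vanish.
\item Hence $0=(\epsilon^*Z\otimes K)^G\simeq Z\otimes\KK(\CC^{hG})$.
\end{itemize}
With this, even your induction on $|G|$ becomes unnecessary.

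\textbf{Comparison with the paper.} With this repair, your route is genuinely different from the paper's. The paper confronts directly the incompatibility of norms with $L_Z$ that you flagged. By Hill's theorem, the levelwise localization $L_Z\KK_G(\CC)$ is again a normed $G$-ring. So one can take categorical $G$-fixed points of the counit $N_e^G(L_Z\KK(\CC))\to L_Z\KK_G(\CC)$ and obtain a ring map $(N_e^G 0)^G=0\to L_Z\KK(\CC^{hG})$ directly. Your version trades that localization theorem for two elementary facts: $\Phi^H N_e^H\simeq\mathrm{id}$, and joint conservativity of geometric fixed points.
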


\begin{proof}
	For the if direction, the symmetric monoidal forgetful functor $\CC^{hG} \to \CC$ induces a map of commutative ring spectra on $Z$-localized K-theory
	\[
		L_Z \KK(\CC^{hG}) \too L_Z \KK(\CC),
	\]
	so the vanishing of the source implies the vanishing of the target.

	For the only if direction, recall that equivariant K-theory assembles into a normed $G$-ring spectrum by \cite[Example 3.3.4]{HR} (see also \cite[Corollary 4.3.9]{EH} and \cite[Corollary C]{CHLL})
	\[
		\KK_G(\CC) \qin \CAlgG(\equivSp),
		\qquad \KK_G(\CC)^H \simeq \KK(\CC^{hH}) \qin \CAlg(\Sp).
	\]
	Applying $Z$-localization level-wise at each categorical fixed point, we get a normed $G$-ring spectrum (see \cite[Theorem 3.23]{Hill}, noting that $L_{i_* Z}$ in \emph{loc.\ cit.} is equivalent to $Z$-localization level-wise since they have the same acyclics by \cite[Proposition 3.20]{Hill})
	\[
		L_Z \KK_G(\CC) \qin \CAlgG(\equivSp),
		\qquad (L_Z \KK_G(\CC))^H \simeq L_Z \KK(\CC^{hH}) \qin \CAlg(\Sp).
	\]
	Consider the counit of the norm-restriction adjunction, giving a map
	\[
		N_e^G(L_Z \KK(\CC)) \simeq N_e^G((L_Z \KK_G(\CC))^e) \too L_Z \KK_G(\CC)
		\qin \CAlgG(\equivSp).
	\]
	Taking categorical $G$-fixed points, we get a map of commutative ring spectra
	\[
		(N_e^G(L_Z \KK(\CC)))^G \too L_Z \KK(\CC^{hG})
		\qin \CAlg(\Sp).
	\]
	By assumption the source vanishes, and since this is a ring map, the target vanishes as well.
\end{proof}

With this in place, we return to the main line of the proof.

\begin{prop}\label{SpTn-dbl}
	$\LTm \KK(\SpTn^\dbl) = 0$ for all $m \geq n{+}2$.
\end{prop}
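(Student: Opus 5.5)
Fix $m \geq n{+}2$. By \cref{some-vanishing} there is a $\Tn$-local finite $G$-Galois extension $R$ of $\STn$ with $\LTm \KK(\cModdbl_R) = 0$. The plan is to apply \cref{descent} with $\CC = \cModdbl_R$, $Z = \Tm$ and the group $G$, once we identify $\CC^{hG}$ with $\SpTn^\dbl$.

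First, $\cModdbl_R$ is a stably symmetric monoidal idempotent-complete category. The $G$-action on $R$ by $\bbE_\infty$-automorphisms induces a $G$-action on $\cMod_R$ through symmetric monoidal functors (base change along each $g \colon R \to R$). This action preserves dualizable objects, so it gives an object of $\CAlg(\Catperf)^{\BB G}$. Here $\SpTn^\dbl$ denotes the dualizable objects in $\SpTn$, with the $\Tn$-local smash product.

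Next, we need an equivalence $\SpTn^\dbl \simeq (\cModdbl_R)^{hG}$. This is Galois descent in the $\Tn$-local setting, following Mathew's treatment of Galois extensions in a stable homotopy theory \cite{MatGal}. A finite $G$-Galois extension $R$ of $\STn$ in $\SpTn$ is descendable there, since finite Galois extensions in a stable homotopy theory are descendable. Descendability gives
\[
\SpTn \simeq \lim_{\Delta} \cMod_{R^{\otimes \bullet+1}},
\]
where the tensor powers are taken $\Tn$-locally. The Galois condition $R \otimes R \simeq \prod_G R$ identifies this cosimplicial diagram with the one computing $(\cMod_R)^{hG}$, so $\SpTn \simeq (\cMod_R)^{hG}$ as symmetric monoidal categories. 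Dualizability is detected in a limit of symmetric monoidal categories componentwise, so restricting to dualizable objects gives $\SpTn^\dbl \simeq (\cModdbl_R)^{hG}$.

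Finally, \cref{descent} with $Z = \Tm$ transfers the vanishing $\LTm \KK(\cModdbl_R) = 0$ to $\LTm \KK(\SpTn^\dbl) = 0$. The step I expect to be the main obstacle is the Galois descent equivalence, specifically that a $\Tn$-local finite Galois extension is descendable in $\SpTn$ and that descent restricts correctly to dualizable objects. I would cite Mathew's general results for faithful Galois extensions in a stable homotopy theory, checking faithfulness in $\SpTn$. Faithfulness should follow from the fact that $R$ is dualizable and $R \otimes R \simeq \prod_G R$, as in Rognes' original argument.
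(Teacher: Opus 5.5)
Your architecture matches the paper's: take $R$ from \cref{some-vanishing}, identify $\SpTn^\dbl \simeq (\cModdbl_R)^{hG}$, and apply the ``only if'' direction of \cref{descent} with $Z = \Tm$. The passage to dualizable objects is also fine. The gap is at the step you flagged yourself, and the justification you propose for it fails. Dualizability of $R$ and $R \otimes R \simeq \prod_G R$ hold for \emph{every} $G$-Galois extension, yet Galois extensions need not be faithful, hence need not be descendable either. Wieland's example shows this: $(\mrm{H}\Fp)^{hC_p} \too \mrm{H}\Fp$ (trivial action) is $C_p$-Galois, but the nonzero module $(\mrm{H}\Fp)^{tC_p}$ becomes zero after base change to $\mrm{H}\Fp$. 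So neither ``finite Galois extensions are descendable'' nor ``faithfulness follows from dualizability plus the Galois condition'' holds in general. Indeed, a descent equivalence $\Mod_A \simeq (\Mod_B)^{hG}$ forces base change to be conservative, so any such result must take faithfulness as an input.

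The missing ingredient is a property of the ambient category, not of the Galois axioms. It is Kuhn's theorem that $\SpTn$ is $1$-semiadditive, i.e.\ the $\Tn$-local norm $\LTn(X_{hG}) \to X^{hG}$ is an equivalence. This is exactly what the paper uses: Rognes' Proposition 6.3.3 turns the vanishing Tate construction into faithfulness, and Galois descent then gives $\SpTn \simeq (\cMod_R)^{hG}$. Concretely, $\STn \simeq R^{hG} \simeq \LTn(R_{hG})$ puts $\STn$ in the localizing subcategory generated by $R$. Hence $R \otimes M = 0$ forces $M \simeq \LTn((R \otimes M)_{hG}) = 0$. The same norm equivalences show directly that the unit $M \to (R \otimes M)^{hG}$ and the counit $R \otimes N^{hG} \to N$ of the comparison adjunction are equivalences, so you do not need descendability at all. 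With this input in place of your faithfulness argument, the rest of your proof goes through as written.
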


\begin{proof}
	Fix $m \geq n{+}2$, and choose a $\Tn$-local finite $G$-Galois extension $R/\STn$ as in \cref{some-vanishing}.
	We observe that \cite[Proposition 6.3.3]{RognesGalois} shows that $R$ is faithful over $\STn$ in light of the $1$-semiadditivity of $\Tn$-local spectra \cite{Kuhn}.
	Hence, we have a symmetric monoidal equivalence (see for example \cite[Proposition 3.11]{Desc})
	\[
		\SpTn \iso (\cMod_R)^{hG},
	\]
	so upon passing to dualizable objects the result follows from \cref{descent}.
\end{proof}

\begin{prop}\label{STn}
	$\LTm \KK(\STn) = 0$ for all $m \geq n{+}2$.
\end{prop}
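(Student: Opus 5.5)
We deduce the statement from \cref{SpTn-dbl} by comparing $\Perf(\STn)$ with $\SpTn^\dbl$. Here $\KK(\STn)$ means the K-theory of the ordinary ring spectrum $\STn$, that is, of $\Perf(\STn)$.

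The inclusion $\Perf(\STn) \to \SpTn^\dbl$ is symmetric monoidal. Compact $\STn$-modules are $\Tn$-local, since they are built from $\STn$ by finitely many cofibers and retracts, and they are dualizable in $\SpTn$. The functor therefore induces a map of commutative ring spectra
\[
	\LTm \KK(\STn) \too \LTm \KK(\SpTn^\dbl),
\]
but this map points the wrong way: vanishing of the target does not force vanishing of the source. We need a comparison in the other direction.

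To get one, we use the monochromatic picture already hidden in \cref{fracture}. There it was noted, following \cite[Corollary 4.11]{DescVan}, that the vertical fibers in the square for $R = \SS$ are both the K-theory of the $n$-monochromatization of $\Perf(\SS)$. We run the same argument with $\SpTn^\dbl$ in place of the top-right corner. Concretely:
\begin{itemize}
	\item The fracture square identifies $\LTm \KK(\LTn \SS) = \LTm\KK(\STn)$ with $\LTm \KK(\Lnf\SS)$, using the \hyperref[inductive-assumption]{Inductive Hypothesis}.
	\item $\KK(\Lnf\SS)$ is the K-theory of $\Perf(\SS)$ localized away from height $>n$, namely $\Perf(\SS)/\Perf(\SS)_{\geq n+1}$.
	\item The Verdier sequence $\Perf(\SS)_{=n} \to \Perf(\Lnf\SS) \to \Perf(\Lnmf\SS)$ shows that, $\Tm$-locally and again by the Inductive Hypothesis, $\LTm\KK(\STn)$ agrees with $\LTm \KK(\Perf(\SS)_{=n})$. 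Here $\Perf(\SS)_{=n}$ is the monochromatic layer: $\Lnf$-local, type $\geq n$ finite spectra.
\end{itemize}

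Now $\Perf(\SS)_{=n}$ also sits inside $\SpTn^\dbl$. Its objects are $\Tn$-local and compact, hence dualizable. Moreover, tensoring with it gives a $\KK(\SpTn^\dbl)$-module structure on $\KK(\Perf(\SS)_{=n})$: dualizable $\otimes$ type-$n$ compact is type-$n$ compact in $\SpTn$, and these agree with $\Perf(\SS)_{=n}$, because $\Tn$-local compact objects of type $\geq n$ coincide with $M_n^f$-type objects. So $\KK(\Perf(\SS)_{=n})$ is a module over the ring $\KK(\SpTn^\dbl)$. After $\Tm$-localization it is a module over the zero ring by \cref{SpTn-dbl}, hence zero. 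Combined with the previous identification, this gives $\LTm\KK(\STn)=0$.

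The main obstacle is the module structure in the last step. It requires checking that compact objects of $\SpTn$ which are dualizable and of type $n$ form a tensor ideal of $\SpTn^\dbl$ equivalent to $\Perf(\SS)_{=n}$, and that the action is well defined at the level of $\Catperf$, so that K-theory inherits the module structure. If this is delicate, I would instead use the thick subcategory generated by a single type-$n$ complex $F$. Then $\KK(\Perf(\SS)_{=n})$ is a retract-colimit of pieces on which $\SpTn^\dbl$ acts via $X\mapsto X\otimes F$, since $\SpTn^\dbl\otimes F \subseteq \Perf(\SS)_{=n}$ because $X\otimes F$ is compact for dualizable $X$ in $\SpTn$.
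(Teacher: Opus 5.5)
Your argument is correct, but it takes a different route from the paper's. The paper does not look for a comparison in the other direction. By \cite[Proposition 4.15]{DescVan}, $\Perf(\STn)\to\SpTn^\dbl$ is fully faithful with $\Lnmf$-local Verdier quotient $\DD$. So your ``wrong-way'' map $\LTm\KK(\STn)\to\LTm\KK(\SpTn^\dbl)$ is the fiber of $\LTm\KK(\SpTn^\dbl)\to\LTm\KK(\DD)$. The last two terms vanish by \cref{SpTn-dbl} and the Inductive Hypothesis, hence so does the first.

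You instead pass through the monochromatic layer. Two ingredients identify $\LTm\KK(\STn)$ with $\LTm\KK(\SpTn^\omega)$:
\begin{itemize}
\item \cref{fracture} for $R=\SS$;
\item the localization sequence for $\Perf(\Lnf\SS)\to\Perf(\Lnmf\SS)$ (Thomason--Neeman, since $\Lnmf$ is smashing and its kernel in $\Lnf$-local spectra is generated by the compact objects $\Lnf F$, $F$ of type $n$), together with $\Lnf F\simeq\LTn F$.
\end{itemize}
Then $\SpTn^\omega$ is a thick tensor ideal of $\SpTn^\dbl$, hence a module over it in $\Catperf$. Its $\Tm$-localized K-theory is therefore a module over the zero ring.

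This replaces the input \cite[Proposition 4.15]{DescVan} by standard localization theory and a module argument of the same kind as in \cref{main-thm}. It also yields \cref{LnfS} directly, so \cref{STn} becomes a consequence of it rather than the other way around. The paper's route is shorter once that citation is granted.

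Two small points:
\begin{itemize}
\item The kernel consists of $\Lnf$-localizations of type $\geq n$ finite spectra and their retracts; no nonzero finite spectrum is itself $\Lnf$-local.
\item Your fallback via a single complex $F$ is unnecessary. The module structure in your main argument is unproblematic: restrict the action of $\mathrm{Ind}(\SpTn^\dbl)$ on $\SpTn$ to compact objects, since all action maps preserve compacts. By contrast, the functor $X\mapsto X\otimes F$ on its own only gives a map $\KK(\SpTn^\dbl)\to\KK(\SpTn^\omega)$, not a module structure.
\end{itemize}
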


\begin{proof}
	Recall from \cite[Proposition 4.15]{DescVan} that
	\[
		\Perf(\STn) \too \SpTn^\dbl
	\]
	is fully faithful and its Verdier quotient $\DD$ is $\Lnmf$-local.
	Applying $\Tm$-localized K-theory, we get an exact sequence
	\[
		\LTm \KK(\STn) \too \LTm \KK(\SpTn^\dbl) \too \LTm \KK(\DD).
	\]
	By \cref{SpTn-dbl} the middle term vanishes and by the \hyperref[inductive-assumption]{Inductive Hypothesis} the last term vanishes, hence so does the first term.
\end{proof}

\begin{prop}\label{LnfS}
	$\LTm \KK(\Lnf\SS) = 0$ for all $m \geq n{+}2$.
\end{prop}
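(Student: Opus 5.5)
The plan is to apply \cref{fracture} with $R = \SS$. That proposition gives, for every $m \geq n{+}2$, an isomorphism
\[
	\LTm \KK(\Lnf \SS) \iso \LTm \KK(\LTn \SS) = \LTm \KK(\STn).
\]
The right-hand side vanishes by \cref{STn}, so the left-hand side vanishes as well.

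The only inputs are the fracture square and the inductive hypothesis. \cref{fracture} holds for any ring spectrum, in particular for $\SS$. The pullback square it provides is
\[
	\KK(\Lnf\SS) \to \KK(\STn), \qquad \KK(\Lnmf\SS) \to \KK(\Lnmf\STn).
\]
Both bottom terms are K-theories of $\Lnmf$-local ring spectra. By the \hyperref[inductive-assumption]{Inductive Hypothesis}, that is, \cref{main-thm-intro} at height $n-1$ applied to $\Perf(\Lnmf\SS)$ and $\Perf(\Lnmf\STn)$, they vanish after $\Tm$-localization for $m \geq n{+}1$, and in particular for $m \geq n{+}2$. Since $\LTm$ is exact, the pullback square stays a pullback after localization. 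The top map is therefore an equivalence $\LTm\KK(\Lnf\SS) \simeq \LTm\KK(\STn)$, which is exactly the consequence stated in \cref{fracture}.

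There is essentially no obstacle here, since the substantive work was done in \cref{STn} and \cref{fracture}. The one point to check is that the hypothesis of \cref{fracture} is met by $R=\SS$. Because $\LTn\SS = \STn$, the proposition applies verbatim, and combining the isomorphism above with \cref{STn} gives $\LTm \KK(\Lnf\SS) = 0$ for all $m \geq n{+}2$.
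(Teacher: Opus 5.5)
Your proposal is correct and matches the paper's proof: apply \cref{fracture} with $R = \SS$ to get $\LTm\KK(\Lnf\SS) \simeq \LTm\KK(\STn)$, then conclude from \cref{STn}. Your unpacking of why the bottom corners of the square vanish $\Tm$-locally via the \hyperref[inductive-assumption]{Inductive Hypothesis} is also accurate.
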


\begin{proof}
	By \cref{fracture}, the map $\Lnf\SS \to \STn$ induces an isomorphism on $\Tm$-localized K-theory, so the result follows from \cref{STn}.
\end{proof}

Finally, we restate and prove the main theorem.

\begin{thm}[{\cref{main-thm-intro}}]\label{main-thm}
	If $\CC \in \Catperf$ is $\Lnf$-local then $\LTm \KK(\CC) = 0$ for all $m \geq n{+}2$.
\end{thm}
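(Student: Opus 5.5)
The plan is to carry out Step (5) of the outline: deduce the vanishing for an arbitrary $\Lnf$-local $\CC$ from \cref{LnfS} by a module-structure argument. The key input is that being $\Lnf$-local for $\CC \in \Catperf$ means $\CC$ is canonically a module over $\Perf(\Lnf\SS)$ in $\Catperf$. Since $\Lnf$ is a smashing localization, $\Perf(\Lnf\SS)$ is an idempotent commutative algebra in $\Catperf$, namely the localization of $\Perf(\SS) = \Sp^\omega$. I expect the definition of $\Lnf$-locality in $\Catperf$, as in \cite{DescVan}, to be exactly that $\CC \simeq \CC \otimes \Perf(\Lnf\SS)$, or equivalently that the action of $\Perf(\SS)$ factors through $\Perf(\Lnf\SS)$. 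In either form this yields a module structure
\[
	\Perf(\Lnf\SS) \otimes \CC \too \CC .
\]

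Next I will use that $\KK \colon \Catperf \to \Sp$ is lax symmetric monoidal. Hence $\KK(\Perf(\Lnf\SS)) = \KK(\Lnf\SS)$ is an $\bbE_\infty$-ring spectrum and $\KK(\CC)$ is a module over it. Applying the symmetric monoidal localization $\LTm$ (on the category of spectra with the $\Tm$-local smash product), $\LTm\KK(\CC)$ becomes a module over the ring $\LTm\KK(\Lnf\SS)$, at least in the homotopy category. By \cref{LnfS} this ring vanishes for $m \geq n{+}2$. A module over the zero ring is zero, because the identity of the module factors through the action of the unit, which is the zero map. Therefore $\LTm\KK(\CC) = 0$.

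To avoid subtleties about localizing module structures, I can argue more concretely, as in \cref{some-vanishing}. Choose $\Tm$ to be a ring spectrum. Then $\Tm \otimes \KK(\CC)$ is a module over the ring $\Tm \otimes \KK(\Lnf\SS)$, which is $\Tm$-acyclic and hence zero as a ring. So $\Tm \otimes \KK(\CC)$ is zero, which says $\KK(\CC)$ is $\Tm$-acyclic, i.e.\ $\LTm\KK(\CC) = 0$.

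The main point to verify is the first step: that $\Lnf$-locality of $\CC$ gives a genuine $\Perf(\Lnf\SS)$-module structure in $\Catperf$. This should follow from the smashing property, since $\Perf(\Lnf\SS) \simeq \Perf(\SS) \otimes \Perf(\Lnf\SS)$ is idempotent and modules over an idempotent algebra are precisely the local objects. The remaining steps are formal. The induction itself is already completed by \cref{LnfS}, which relies on the \hyperref[inductive-assumption]{Inductive Hypothesis}, and the base case $n = -1$ is vacuous because $\Lnf = 0$ forces $\CC = 0$.
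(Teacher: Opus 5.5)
Your proposal is correct and is essentially the paper's proof: an $\Lnf$-local $\CC$ is a module over $\Perf(\Lnf\SS)$, so $\LTm\KK(\CC)$ is a module over $\LTm\KK(\Lnf\SS)$, which vanishes by \cref{LnfS}. Your extra checks are sound refinements of points the paper simply asserts: that locality yields the module structure because $\Perf(\Lnf\SS)$ is idempotent, and the concrete $\Tm\otimes(-)$ variant.
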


\begin{proof}
	Any $\Lnf$-local $\CC \in \Catperf$ is a module over $\Perf(\Lnf\SS)$, hence $\LTm \KK(\CC)$ is a module over $\LTm \KK(\Lnf\SS)$.
	The latter vanishes by \cref{LnfS}, hence so does the former.
\end{proof}

	\bibliographystyle{alpha}
	\bibliography{refs}

\end{document}